\newtheorem{lemma}{Lemma}
\newtheorem{cor}{Corollary}
\newtheorem{theorem}{Theorem}
\title{Consecutive positive integers with the same number of divisors}
\author{Vasilii A. Dziubenko \qquad and\qquad Vladimir A. Letsko} 
\begin{document} 
\maketitle 
\thispagestyle{empty} 

\begin{abstract} 
We establish new upper bounds for the length of runs of consecutive positive integers each with exactly $k$ divisors, where $k$ is a given positive integer of some special forms. Also we have found exact values of the maximum possible runs for some fixed values of $k$. In addition, we exhibit the run of 11 consecutive positive integers each with exactly 36 divisors, the run of 13 consecutive positive integers each with exactly 12 divisors, the run of 17 consecutive positive integers each with exactly 24 divisors, and the run of 17 consecutive positive integers each with exactly 48 divisors. 
\end{abstract} 

\section{Function $M(k)$}

Unless explicitly stated otherwise, integers in this paper are assumed to be positive integers.
For a integer $n$, let $\tau(n)$ denote the number of divisors of  integer $n$.
Following \cite{DE,A1}, we say that integers $m$ and $n$ are {\it equidivisible} if $\tau(m)=\tau(n)$.

It is commonly known that 
\begin{equation}
    \tau(n)=\prod^{s}_{i=1}{(\alpha_i+1)},\quad \text{where } n=\prod^s_{i=1}p_i^{\alpha_i} \text{ is the prime factorization of } n.
\label{tau}\end{equation}
Equation \eqref{tau} trivially implies that $n$ is an exact square if and only if $\tau(n)$ is odd. Since consecutive integers cannot all be exact squares, the number of divisors of any consecutive equidivisible integers must be even.
At the same time, for some even $k$, one can find relatively long runs of consecutive integers with exactly $k$ divisors each. For example, 
$\tau(242)=\tau(243)=\tau(244)=\tau(245)=6$. On the other hand, as we will see from Lemma~\ref{lem1} below, the maximal length of such runs is bounded for any $k$, which inspires the following definition.

For an integer $k>0$, we define $M(k)$ as the maximal length of the run of consecutive equidivisible integers with exactly $k$ divisors each. 

It is clear that $M(k)=1$ for all odd $k$. Therefore we will always assume that $k$ is even. It is also clear that $M(2)=2$ and $M(4)=3$. Exact values and upper bounds of $M(k)$ for some other even $k$ are given in \cite{DE,A1,GP}. 

In the present paper, we give several new upper and lower bounds of $M(k)$ for $k$ of some special forms. We also obtain many new exact values of $M(k)$. Finally, we present three longest known runs of 17 consecutive equidivisible integers.

\section{Upper bounds for $M(k)$}\label{sec2}


\begin{lemma}\label{lem1} 
Let $k>1$ be an integer and $p$ be the smallest prime such that $p\nmid k$. Then $M(k) \le 2^p-1$.
\end{lemma}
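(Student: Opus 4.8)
The plan is to extract a contradiction from the divisor-counting formula \eqref{tau} by watching the power of $2$ that divides the terms of a run. Write $v_2(n)$ for the exponent of $2$ in the prime factorization of $n$. If $n$ lies in a run of integers each having exactly $k$ divisors, then by \eqref{tau} the factor $v_2(n)+1$ occurs in the product $\prod_i(\alpha_i+1)=\tau(n)=k$, so $v_2(n)+1$ divides $k$. Since $p\nmid k$ by hypothesis, this forces $v_2(n)+1\neq p$, i.e.\ no term of the run can satisfy $v_2(n)=p-1$ exactly. Thus it suffices to prove that every block of $2^p$ consecutive integers contains a term $n$ with $v_2(n)=p-1$: the existence of such a term inside a run of length $2^p$ would give the contradiction $p\mid k$, and hence $M(k)\le 2^p-1$.

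The remaining step is an elementary count of the multiples of $2^{p-1}$ in an arbitrary block of $2^p$ consecutive integers. Because the block has length exactly $2^p=2\cdot 2^{p-1}$, it contains exactly two multiples of $2^{p-1}$, and these are consecutive multiples, say $2^{p-1}t$ and $2^{p-1}(t+1)$. Of the two consecutive integers $t$ and $t+1$ one is odd, and the corresponding multiple then has $v_2=p-1$ exactly, since the odd cofactor contributes no further factor of $2$. This produces the required term: assuming a run of length $2^p$ exists, the term $n$ in it with $v_2(n)=p-1$ satisfies $\tau(n)=k$, yet $p=v_2(n)+1$ divides $\tau(n)$, contradicting $p\nmid k$.

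The argument is short, so there is no single hard computation; the main thing to get right is the choice of modulus $2^{p-1}$ together with the counting observation that guarantees an odd multiple of $2^{p-1}$ (hence a term with $v_2$ equal to $p-1$) in every window of $2^p$ consecutive integers. I would also note that only the property $p\nmid k$ is used, not the minimality of $p$: the same reasoning with any prime $q\nmid k$ yields $M(k)\le 2^q-1$, and taking the smallest such prime $p$ gives the sharpest bound. As a consistency check, when $k$ is odd we have $p=2$ and the statement recovers $M(k)\le 3$, matching the earlier observation that odd $k$ in fact forces $M(k)=1$.
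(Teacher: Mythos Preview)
Your proof is correct and follows essentially the same route as the paper: both arguments locate, inside any window of $2^p$ consecutive integers, an integer $n$ with $v_2(n)=p-1$ (equivalently $n\equiv 2^{p-1}\pmod{2^p}$), and then use \eqref{tau} to conclude $p\mid\tau(n)$, contradicting $p\nmid k$. Your presentation is more detailed, and the side remarks that only $p\nmid k$ is needed and that the $k$ odd case recovers $M(k)\le 3$ are correct and harmless additions.
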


\begin{proof}
It is easy to see that among any $2^p$ consecutive integers there exists an integer $n$ such that $n \equiv 2^{p-1} \pmod{2^p}$.
Then $p\mid\tau(n)$ and thus $\tau(n)\ne k$, implying that $M(k)<2^p$.
\end{proof}

\begin{cor}
Let $k$ be an integer not divisible by $3$. Then $M(k) \le 7$.
\end{cor}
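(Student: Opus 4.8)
The plan is to deduce this corollary directly from Lemma~\ref{lem1}, so the entire argument reduces to identifying the smallest prime $p$ that fails to divide $k$ and checking that it is small. Since the hypothesis is precisely that $3\nmid k$, the prime $3$ is already a candidate for the smallest prime missing from the factorization of $k$, so I expect $p$ to be at most $3$ in every case.

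First I would dispose of the trivial boundary case: if $k=1$, then $\tau(n)=1$ forces $n=1$, so $M(1)=1\le 7$ and there is nothing more to prove. Having set this aside, I would assume $k>1$, which is exactly the hypothesis needed to invoke Lemma~\ref{lem1}.

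The heart of the argument is a short two-case split on the parity of $k$. If $k$ is odd, then $2\nmid k$, so the smallest prime not dividing $k$ is $p=2$, and Lemma~\ref{lem1} yields $M(k)\le 2^{2}-1=3\le 7$. If instead $k$ is even, then $2\mid k$; combining this with the hypothesis $3\nmid k$ shows that the smallest prime failing to divide $k$ is $p=3$, whence Lemma~\ref{lem1} gives $M(k)\le 2^{3}-1=7$. In both cases the claimed bound holds.

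There is essentially no obstacle here beyond organizing the case analysis correctly: the only subtlety is to confirm that the hypothesis $3\nmid k$ guarantees $p\le 3$ regardless of whether $k$ is odd or even, and that the edge case $k=1$ (which lies outside the scope of Lemma~\ref{lem1}) is handled separately. Once these observations are in place, the bound $M(k)\le 7$ follows immediately from the monotonicity of $2^{p}-1$ in $p$.
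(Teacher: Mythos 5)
Your proposal is correct and is exactly the argument the paper intends (the corollary is left without an explicit proof, being an immediate consequence of Lemma~\ref{lem1}): since $3\nmid k$, the smallest prime not dividing $k$ is $2$ or $3$, giving $M(k)\le 2^3-1=7$. Your separate handling of $k=1$ and the parity split are fine, if slightly more detailed than necessary.
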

 
\begin{lemma}\label{lem2} 
Suppose that an integer $k$ is divisible by $2^s$ but not by $2^{s+1}$. Then $M(k) \le 2^{2^s+1}-1$.
\end{lemma}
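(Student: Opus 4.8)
The plan is to mimic the window argument of Lemma~\ref{lem1}, but to extract a contradiction from \emph{two} elements of a hypothetical long run rather than from a single one. Suppose, for contradiction, that there are $2^{2^s+1}$ consecutive integers each with exactly $k$ divisors, and write $k=2^s m$ with $m$ odd. Since $2^{2^s+1}=2\cdot 2^{2^s}$, this window contains exactly two integers lying in the residue class $2^{2^s-1}\pmod{2^{2^s}}$; call them $N$ and $N+2^{2^s}$. Each of them is divisible by $2^{2^s-1}$ but not by $2^{2^s}$, so the exponent of $2$ in its prime factorization is exactly $2^s-1$.

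The first key step is to read off what $\tau(N)=k$ forces for such an $N$. By \eqref{tau}, the factor of $\tau(N)$ coming from the prime $2$ is $(2^s-1)+1=2^s$, so $\tau(N)=2^s\,\tau(N_0)$, where $N_0$ denotes the odd part of $N$. Comparing the exact power of $2$ on both sides of $2^s\,\tau(N_0)=k=2^s m$ with $m$ odd, one concludes that $\tau(N_0)$ is odd; hence $N_0$ is a perfect square. Since $N_0$ is odd, I would write $N=2^{2^s-1}c^2$ with $c$ odd, and the identical reasoning applied to the second element gives $N+2^{2^s}=2^{2^s-1}{c'}^2$ with $c'$ odd.

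Subtracting these two representations yields $2^{2^s}=2^{2^s-1}({c'}^2-c^2)$, that is, ${c'}^2-c^2=2$. This is impossible, because the difference of two odd squares is divisible by $8$. The contradiction shows that no run of length $2^{2^s+1}$ can exist, whence $M(k)\le 2^{2^s+1}-1$. I expect the only genuinely delicate point to be the choice of residue class: one must target exponent $2^s-1$ (so that the prime $2$ contributes the factor $2^s$ and thereby pins down the parity of $\tau(N_0)$) rather than the more obvious exponent $2^s$, and one must then check that a window of length $2\cdot 2^{2^s}$ is exactly long enough to guarantee two such elements spaced $2^{2^s}$ apart.
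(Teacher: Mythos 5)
Your proof is correct and is essentially the paper's own argument: both isolate the two integers in the window with $2$-adic valuation exactly $2^s-1$, deduce from $2^{s+1}\nmid k$ that their odd parts are perfect squares, and derive a contradiction from two odd squares differing by $2$. The only cosmetic difference is that you work with the residue class modulo $2^{2^s}$ while the paper names the two classes $2^{2^s-1}$ and $3\cdot 2^{2^s-1}$ modulo $2^{2^s+1}$.
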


\begin{proof}
Assume that $M(k) \geq 2^{2^s+1}$ and consider a run of $2^{2^s+1}$ consecutive integers with $k$ divisors each.
Among them, there exist two that are congruent to $2^{2^s-1}$ and $3\cdot 2^{2^s-1}$ modulo $2^{2^s+1}$.
Let $a$ be the smallest of these two integers. Then the other one is $b=a+2^{2^s}$. We have $a=2^{2^s-1}a'$ and $b=2^{2^s-1}(a'+2)$, where $a'$ is odd.

By multiplicativity of $\tau(n)$, we have $k=\tau(a)=\tau(2^{2^{s-1}}a')=2^sk_1$ and $k=\tau(b)=\tau(2^{2^{s-1}}(a'+2))=2^sk'$, where $k'=\tau(a')=\tau(a'+2)$ is odd (since $2^{s+1}\nmid k$). That is, both $a'$ and $a'+2$ must be squares, which is impossible.
\end{proof}

\begin{theorem}\label{th1}
For any odd prime $p$ or $p=9$, $M(6p) \le 5$.
\end{theorem}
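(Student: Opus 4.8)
The plan is to assume, for contradiction, a run of six consecutive integers each with exactly $6p$ divisors, and to manufacture two perfect squares differing by $1$ or $2$. The engine is the factorization $6p = 2\cdot 3\cdot p$ together with the fact that $3p$ is odd in every admissible case: $3p$ is a product of two distinct primes when $p$ is an odd prime, and $3p = 27$ when $p = 9$. First I would record the local constraint that for any $n$ with $\tau(n) = 6p$ the product $(v_2(n)+1)(v_3(n)+1)$ divides $6p$, where $v_q$ denotes the $q$-adic valuation. Two consequences drive everything. If $v_2(n) = 1$ then $\tau(n/2) = 3p$ is odd, so $n/2$ is a perfect square and $n = 2x^2$; and if $v_3(n) = 1$ then $\tau(n/3) = 3p$ is odd, so $n = 3w^2$. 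More generally, whenever a valuation $v_q(n)$ is odd, the part of $n$ coprime to $q$ has odd $\tau$ and is hence a perfect square.

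Next I would run two counting arguments on the residue of the leading term $n$ of the run. Among six consecutive integers there are one or two terms $\equiv 2 \pmod 4$; if there are two, they are $2x^2$ and $2y^2$ at distance $4$, forcing $x^2 - y^2 = 2$, which is impossible, so this disposes of $n \equiv 1,2 \pmod 4$. Dually, the run contains exactly two multiples of $3$; if both have $v_3 = 1$ they are $3w^2$ and $3w'^2$ at distance $3$, forcing $w^2 - w'^2 = 1$, again impossible, and a short check mod $9$ shows this happens precisely when $n \equiv 1,2,3 \pmod 9$. This refines the bound $M(6p)\le 7$ available from Lemma~\ref{lem2}.

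For the residues with $n \equiv 0,3 \pmod 4$, the run contains a single term $\equiv 2 \pmod 4$, say $2x^2$ with $x$ odd, and it sits as the middle of three consecutive even terms $2(x^2-1),\,2x^2,\,2(x^2+1)$, all lying in the window. Since $x$ is odd, $8 \mid x^2 - 1$, so $v_2\!\big(2(x^2-1)\big) \ge 4$; as $\tau\!\big(2(x^2-1)\big) = 6p$, the admissible values of this valuation are the $a \ge 4$ with $(a+1)\mid 6p$, namely $a \in \{5, p-1, 2p-1, 3p-1, 6p-1\}$. When $a$ is odd the odd part of $2(x^2-1)$ is a perfect square, so $2(x^2-1) = 2Y^2$ and $x^2 - Y^2 = 1$, impossible; this eliminates $a \in \{5, 2p-1, 6p-1\}$ at one stroke.

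The main obstacle is the residual even valuations $a \in \{p-1, 3p-1\}$, where the odd part of $2(x^2-1)$ has $\tau \in \{6,2\}$ and need not be a square, so the Pell-type contradiction does not apply directly. Here I expect to use the forced shape of $x$ (from $\tau(x^2) = 3p$ one gets $x = q^{(3p-1)/2}$ or $x = q_1 q_2^{(p-1)/2}$, so $x^2-1$ has a very restricted factorization) together with a parallel $3$-adic run of the same argument, applied to the unique multiple of $3$ with $v_3 = 1$ and its sibling $3(w^2-1)$, in order to close the few classes mod $36$ that survive both counts. Finally, $p = 5$ and $p = 9$ must be handled separately, since then $5 \mid 6p$ or $6p = 54$ enlarges the list of admissible valuations (for instance $a = 4$ becomes possible when $p = 5$); the same squares-differing-by-one-or-two arguments should dispatch these extra cases.
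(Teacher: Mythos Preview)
Your setup is sound: the observations that $v_2(n)=1$ forces $n=2x^2$ and $v_3(n)=1$ forces $n=3w^2$ are exactly the right leverage, and the two counting arguments correctly exclude $n\equiv 1,2\pmod 4$ and $n\equiv 1,2,3\pmod 9$. The trouble is that the proof stops precisely where the real difficulty lies. In the surviving residues you have the triple $2(x^2-1),\,2x^2,\,2(x^2+1)$ inside the window, and you correctly note that $v_2\bigl(2(x^2-1)\bigr)\ge 4$ forces that valuation into $\{5,\,p-1,\,2p-1,\,3p-1,\,6p-1\}$ and that the odd values collapse immediately to $x^2-Y^2=1$. But for the even values $a\in\{p-1,\,3p-1\}$ the odd cofactor has $\tau\in\{6,2\}$ and nothing forces it to be a square; you say you ``expect'' to close this with the shape of $x$ and a parallel $3$-adic argument, but you do not actually do so, and it is not clear that those tools alone suffice. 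This is the main case, not a mop-up, so as written the argument is incomplete.

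For comparison, the paper's proof takes a rather different route that sidesteps your obstacle entirely. It first sharpens the $2$-adic information by working modulo $8$ rather than $4$: since $n\equiv 6\pmod 8$ would make $n/2$ an odd square congruent to $3\pmod 4$, any run sits inside a window $n_7,n_0,\dots,n_5$ indexed by residues mod $8$, with $n_2=2x^2$. Then $2x^2-1\not\equiv 0\pmod 3$ forces $3\mid n_2$ or $3\mid n_3$. The case $3\mid n_2$ is disposed of by showing $n_5,n_7$ would each be $3\cdot(\text{square})$ with an impossible residue mod $8$. The case $3\mid n_3$ is where the paper does something you do not: it shows $9\nmid n_3$, so $n_3=3t^2$, and then a mod-$5$ check on $2x^2$ and $3t^2$ pins down $5\mid n_0$. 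With $8,3,5$ all dividing $n_0$ and $\tau(n_0)=6p$, one gets $n_0\in\{45\cdot 2^{p-1},\,75\cdot 2^{p-1}\}$, and a size comparison with the smallest integer coprime to $30$ having $6p$ divisors rules out $n_1$. Your plan mentions neither the mod-$5$ constraint nor any size argument, and those are exactly the ingredients that finish the hard case.
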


\begin{proof}
For $p=3$, it is known that $M(18)=5$ \cite{A1}. For the rest assume that $p>3$. 

It is easy to see that for an integer $n\equiv 6\pmod{8}$, it is not possible to have $\tau(n)=6p$. Therefore, any run of integers with $6p$ divisors must occur within consecutive integers $n_7, n_0, n_1, n_2, n_3, n_4, n_5$, where $n_i\equiv i\pmod8$. 
If the number of divisors of either of $n_0,n_1,n_2,n_3,n_4$ is not $6p$, then the statement holds trivially. 
We therefore assume that $\tau(n_0)=\tau(n_1)=\tau(n_2)=\tau(n_3)=\tau(n_4)=6p$.

Since $n_2\equiv 2\pmod8$ and $\tau(n_2)=6p$, we have $n_2= 2q_2^{p-1}r_2^2$ or $n_2=2r_2^{3p-1}$, where $q_2,r_2$ are distinct odd primes. In particular, $n_2=2x^2$ for some odd integer $x$. Since $n_1=n_2-1=2x^2-1 \not\equiv 0 \pmod 3$, either $n_2$ or $n_3$ is divisible by $3$. 

\begin{enumerate}[(i)]
 \item Suppose that $n_2=2x^2$ is divisible by 3, implying that it is also divisible by 9. If $\tau(n_7)=6p$, then $n_7=3y^2$ for some integer $y$. However, $3y^2 \not\equiv 7 \pmod 8$. Hence $\tau(n_7) \ne 6p$. Similary if $\tau(n_5)=6p$, then $n_5=3z^2$ for some integer $z$. However, $3z^2 \not\equiv 5 \pmod 8$.  Therefore $\tau(n_5)\ne 6p$. 
 
 Thus in this case  $M(6p)\le 5$.

 \item Suppose that $n_3$ is divisible by 3. If $9\mid n_3$, then $n_0=3y^2$ for some integer $y$ and thus $n_3=3y^2+3 \not\equiv 0 \pmod 9$, a contradiction implying that $9\nmid n_3$. 
We have $n_2=2x^2$ and $n_3=3t^2$ for some integers $x,t$. Hence, neither of $n_2,n_3$ is congruent 1 or 4 modulo 5, implying that $5\mid n_0$.

If $p$ is the odd prime, then $n_0=45 \cdot 2^{p-1}$ or $n_0=75 \cdot 2^{p-1}$.  Since the smallest positive integer $n$ coprime to $2\cdot 3\cdot 5$ with $\tau(n)=6p$ divisors is $n=13\cdot 11^2\cdot 7^{p-1}$, it cannot be a neighbor of $n_0$, implying that $n_0$ and $n_1$ can not be in the run together.

If $p=9$, then $n_0 \in \{2^8\cdot 3\cdot5^2, 2^8\cdot 3^2\cdot 5, 2^5\cdot 3^2\cdot 5^2\}$. However, for all these cases $n_1$ has not $6p$ divisors. 

Thus, in case (ii) we showed that $n_0$ can not be in the run.
\end{enumerate} 

\end{proof}

Note, that all known runs of integers each having $6p$ divisors for $p$ described in Theorem~\ref{th1}  start with $n_1$.

Now let $k$ be congruent to 2 or 10 modulo 12.

\begin{lemma}\label{lem3} 
If $k \equiv 2 \pmod{12}$ or $k \equiv 10 \pmod{12}$ then $M(k) \le 5$.
\end{lemma}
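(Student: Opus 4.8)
The plan is to translate the two congruence conditions into multiplicative constraints on $k$ and then rule out any run of length $6$, which forces $M(k)\le 5$.

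First I would record the arithmetic content of the hypothesis. The residues modulo $12$ that are $\equiv 2\pmod 4$ are $2,6,10$, and discarding the one divisible by $3$ leaves exactly $2$ and $10$; hence $k\equiv 2$ or $10\pmod{12}$ is equivalent to saying that $2$ exactly divides $k$ (i.e.\ $2\mid k$ but $4\nmid k$) and $3\nmid k$. This is precisely the regime of Lemma~\ref{lem2} with $s=1$, which already gives $M(k)\le 7$, together with the Corollary; so the real task is to improve the bound from $7$ to $5$.

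The key local step concerns the power of $2$ dividing an even member of a run. Let $v_2(\cdot)$ denote the exponent of $2$ in the prime factorization, suppose $\tau(n)=k$, and write $a=v_2(n)$ and $n=2^a m$ with $m$ odd, so $\tau(n)=(a+1)\tau(m)$. If $a=2$ then $3\mid(a+1)\mid\tau(n)$, contradicting $3\nmid k$; if $a=3$ then $4\mid(a+1)\mid\tau(n)$, contradicting that $2$ exactly divides $k$. Hence every even $n$ with $\tau(n)=k$ satisfies $v_2(n)=1$ or $v_2(n)\ge 4$. Moreover, when $v_2(n)=1$ the factor $a+1=2$ already accounts for the unique factor of $2$ allowed in $k$, so $\tau(m)$ must be odd and $m=n/2$ is an odd perfect square, i.e.\ $n=2x^2$.

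Finally I would finish with a short counting argument. Any $6$ consecutive integers contain three consecutive even numbers $E_1<E_2<E_3$ with $E_3=E_1+4$, and the two extremes $E_1,E_3$ are congruent modulo $4$. If $E_1\equiv E_3\equiv 2\pmod 4$, then both are doubled squares, say $E_1=2u^2$ and $E_3=2v^2$, whence $v^2-u^2=2$, which has no solution in integers. If instead $E_1\equiv E_3\equiv 0\pmod 4$, then each has $v_2\ge 2$, hence $v_2\ge 4$ by the previous paragraph, so $16\mid E_1$ and $16\mid E_3$ and therefore $16\mid(E_3-E_1)=4$, again impossible. In either case a run of length $6$ cannot occur, so $M(k)\le 5$. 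I expect the only genuinely delicate point to be the clean determination of the admissible values of $v_2(n)$ --- in particular noticing that the constraints $2\,\|\,k$ and $3\nmid k$ jointly exclude $v_2(n)\in\{2,3\}$ --- after which the pigeonhole on three consecutive even numbers makes the contradiction automatic and uniform across both congruence classes.
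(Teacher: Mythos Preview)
Your proof is correct and follows essentially the same approach as the paper: the paper cites Lemma~\ref{lem1} to confine the run to residues $5,6,7,0,1,2,3\pmod 8$ and then Lemma~\ref{lem2} to forbid residues $2$ and $6$ simultaneously, which is exactly your observation $v_2(n)\notin\{2,3\}$ together with the doubled-square contradiction $v^2-u^2=2$. Your packaging via the three consecutive even numbers and the case split on $E_1\pmod 4$ is slightly more self-contained, but the substance is identical.
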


\begin{proof}
By Lemma~\ref{lem1}, $M(k) \le 7$ and possible remainders of consecutive integers each with $k$ divisors are: $5,6,7,0,1,2,3$. By Lemma~\ref{lem2}, numbers with remainders 6 and 2 cannot be simultaneously present in the run of integers each with required number of divisors.
\end{proof}

Just as in Theorem~\ref{th1} it can be shown that number congruent to 6 modulo 8 cannot belong to the run.

\begin{lemma}\label{lem4} 
Let $p,q$ be (not necessary different) primes greater than $3$. Then $M(2pq) \le 4$.
\end{lemma}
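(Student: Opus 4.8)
The plan is to assume a run of five consecutive integers each with $2pq$ divisors and derive a contradiction, after first extracting a strong structural constraint from $\tau(n)=2pq$. Since $p,q>3$, the number $2pq$ is exactly twice an odd number, so in $\tau(n)=\prod(\alpha_i+1)$ precisely one factor is even; equivalently exactly one exponent $\alpha_i$ is odd, which means the squarefree part of $n$ is a single prime, $n=r\,m^2$ with $r$ prime. I would first record which residues mod $8$ this permits. If $n\equiv4\pmod8$ then $v_2(n)=2$ and $3\mid\tau(n)$, impossible since $3\nmid2pq$; if $n\equiv6\pmod8$ then $v_2(n)=1$ forces $r=2$ and $n/2$ to be a square, while $n/2\equiv3\pmod4$ is not a square. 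Hence every run element lies in residues $\{0,1,2,3,5,7\}\pmod8$.

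Next I would localize the run. A block of five consecutive integers occupies five consecutive residues mod $8$, and the only such block avoiding both $4$ and $6$ is $\{7,0,1,2,3\}$; so the putative run must be $n_7,n_0,n_1,n_2,n_3$ with $n_i\equiv i\pmod8$. Here $n_2\equiv2\pmod8$ gives $n_2=2x^2$ with $x$ odd (as in Theorem~\ref{th1}), while the three odd members $n_7,n_1,n_3$ are consecutive odd integers, each of the form prime times a square, whose squarefree parts are forced to be $\equiv7,1,3\pmod8$ respectively.

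I would then split on divisibility by $3$. Since $n_1=2x^2-1\not\equiv0\pmod3$, the multiples of $3$ in the run are either $\{n_7,n_2\}$ (when $3\mid x$) or $\{n_0,n_3\}$ (when $3\nmid x$). In the first case $9\mid n_2$, so $n_7=n_2-3$ has $v_3(n_7)=1$; thus $3$ is the special prime of $n_7$ and $n_7=3\cdot(\text{odd square})\equiv3\pmod8$, contradicting $n_7\equiv7\pmod8$. This disposes of the case $3\mid x$ cleanly, exactly as in case (i) of Theorem~\ref{th1}.

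The main work, and the expected obstacle, is the case $3\nmid x$, where $3\mid n_0$ and $3\mid n_3$. Here I would bring in divisibility by $5$: exactly one of the five integers is a multiple of $5$, and the three possibilities ($5\mid n_0$, $5\mid n_2$, $5\mid n_7$) are separated by the value of $x^2\bmod5$. When $5\mid n_0$ the argument closes quickly: $n_0$ is then divisible by $2^{\ge4}\cdot3\cdot5$, and since $\tau=2pq$ admits at most three prime factors its support is exactly $\{2,3,5\}$, pinning $n_0$ to an explicit value of the shape $2^a3^b5^c$; meanwhile $n_1=n_0+1$ is coprime to $30$ with $\tau(n_1)=2pq$, and the smallest such integer (about $7^{q-1}11^{p-1}13$) exceeds this explicit $n_0$, a size contradiction in the spirit of case (ii) of Theorem~\ref{th1}. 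The genuine difficulty is the remaining subcases $5\mid n_2$ and $5\mid n_7$: now no run element is divisible by $30$, so the support of $n_0$ is not forced to be small and the crude size bound does not apply directly. To eliminate these I expect to need the finer bookkeeping of the $2$-, $3$-, and $5$-adic valuations (working modulo $9$ and $25$) together with the residues mod $8$ of the special primes $r_7,r_1,r_3$ — for instance, $5\mid n_7$ forces $v_5(n_7)$ even, hence $25\mid n_7$ — in order to drive each configuration into a residue or size contradiction. The case $p=q$ is handled verbatim, using $2p^2=2\cdot p\cdot p$ in the at-most-three-prime-factors count.
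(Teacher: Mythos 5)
Your setup (exactly one odd exponent, exclusion of residues $4$ and $6$ modulo $8$, localization to $n_7,n_0,n_1,n_2,n_3$, and the elimination of the case $3\mid x$ via $n_7\equiv 7\pmod 8$ versus $n_7=3\cdot(\text{odd square})\equiv 3\pmod 8$) is correct and matches the paper's route, which runs Lemma~\ref{lem4} through the machinery of Theorem~\ref{th1}. But the proof is not finished: you leave the subcases $5\mid n_2$ and $5\mid n_7$ open and only sketch a hope of resolving them by ``finer bookkeeping.'' That is a genuine gap, and the resolution is not more bookkeeping --- those subcases simply do not occur, for a reason you never extract. In the case $3\nmid x$ you have $3\mid n_0$ and $3\mid n_3$; you must first rule out $9\mid n_3$: if $9\mid n_3$ then $v_3(n_0)=1$, so $3$ is the special prime of $n_0$ and $n_0=3y^2$, whence $n_3=3(y^2+1)\not\equiv 0\pmod 9$, a contradiction. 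Hence $v_3(n_3)=1$ and $n_3=3t^2$. Now use \emph{both} quadratic shapes modulo $5$: $n_2=2x^2\in\{0,2,3\}$ and $n_3=3t^2\in\{0,2,3\}$ modulo $5$, and $n_3=n_2+1$ forces $n_2\equiv 2$, $n_3\equiv 3\pmod 5$, so the unique multiple of $5$ in the window is $n_0$. Your trichotomy on $x^2\bmod 5$ ignores the constraint coming from $n_3$, which is exactly what collapses it to the single case you already know how to close.

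Once $5\mid n_0$ is forced, your size argument (support of $n_0$ is exactly $\{2,3,5\}$ with $v_2(n_0)\in\{p-1,q-1\}$, so $n_0=2^{p-1}3^{q-1}5$ or $2^{p-1}3\cdot 5^{q-1}$ up to swapping $p,q$, while $n_1=n_0+1$ is coprime to $30$ and the least integer coprime to $30$ with $2pq$ divisors, namely $7^{q-1}11^{p-1}13$ with $q\ge p$, is larger) is sound and is precisely the paper's case~(ii) argument. So the missing idea is small and entirely contained in the part of Theorem~\ref{th1} you cite as a model --- the simultaneous use of $n_2=2x^2$ and $n_3=3t^2$ modulo $5$, preceded by the $9\nmid n_3$ step that legitimizes writing $n_3=3t^2$ --- but as written the argument does not yet prove the lemma.
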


\begin{proof}
Suppose that there exists the run of 5 consecutive integers with $2pq$ divisors. According to the above remark it must consist of the numbers $n_7,n_0,n_1,n_2,n_3$ where $n_i \equiv i \pmod 8$. 

By the same arguments as in the proof of Theorem~\ref{th1}, one can show that:
\begin{enumerate}
\item If $n_1 \equiv 0 \pmod 3$, then $n_1$ and $n_2$ cannot be equidivisible since $2x^2-1 \not\equiv 0 \pmod 3$.
\item If $n_2 \equiv 0 \pmod 3$, then $n_7$ and $n_2$ cannot be equidivisible since $3x^2+3 \not\equiv 0 \pmod 9$.
\item If $n_3 \equiv 3 \pmod 9$ or $n_3 \equiv 6 \pmod 9$, then $n_0$ and $n_1$ cannot be equidivisible since $n_0=2^{p-1}\cdot 3^{q-1}\cdot 5$ or $n_0=2^{p-1}\cdot 3\cdot 5^{q-1}$.
\item If $n_3 \equiv 0 \pmod 9$, then $n_0$ and $n_3$ cannot be equidivisible since $3x^2+3 \not\equiv 0 \pmod 9$.
\end{enumerate}
\end{proof}

Duentsch and Eggleton~\cite{DE} showed that $M(2p) \le 3$ for each prime $p$ greater than 3. We managed to extend this inequality for some other $k$ cogruent to 2 or 10 modulo 12.

We need the following obvious statement:

\begin{lemma}\label{lem5} 
Let $n,s$ be integers and $s$ be odd, $n>1$. Then $\gcd(n-1,\sum_{i=0}^{s-1} n^{s-1-i})$ and $\gcd(n+1,\sum_{i=0}^{s-1} (-1)^i n^{s-1-i})$ both divide $s$.
\end{lemma}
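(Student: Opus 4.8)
The plan is to put each gcd's second argument into closed form and then reduce it modulo the first argument. For the first expression, the sum is the geometric series
\[
\sum_{i=0}^{s-1} n^{s-1-i} = 1 + n + \cdots + n^{s-1} = \frac{n^s-1}{n-1}.
\]
Since $n \equiv 1 \pmod{n-1}$, every term $n^{s-1-i}$ is congruent to $1$ modulo $n-1$, so the whole sum is $\equiv s \pmod{n-1}$. This single congruence is the heart of the argument.

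From here the divisibility is immediate. Write $d = \gcd\!\left(n-1,\ \sum_{i=0}^{s-1} n^{s-1-i}\right)$, so that $d \mid (n-1)$ and $d$ divides the sum. Because $d \mid (n-1)$, the congruence $\sum \equiv s \pmod{n-1}$ descends to $\sum \equiv s \pmod{d}$; but $d$ divides the sum, so $\sum \equiv 0 \pmod{d}$. Combining these gives $s \equiv 0 \pmod{d}$, i.e. $d \mid s$, as claimed.

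For the second expression the same strategy applies with $n+1$ in place of $n-1$. Using the hypothesis that $s$ is odd, the alternating sum telescopes to
\[
\sum_{i=0}^{s-1} (-1)^i n^{s-1-i} = n^{s-1} - n^{s-2} + \cdots + 1 = \frac{n^s+1}{n+1}.
\]
Now reduce modulo $n+1$ via $n \equiv -1$: the $i$-th term satisfies $(-1)^i n^{s-1-i} \equiv (-1)^i(-1)^{s-1-i} = (-1)^{s-1} = 1$, where the last equality uses that $s-1$ is even. Hence each of the $s$ terms is $\equiv 1$, so the sum is $\equiv s \pmod{n+1}$, and the identical gcd argument yields that $\gcd\!\left(n+1,\ \sum_{i=0}^{s-1} (-1)^i n^{s-1-i}\right)$ divides $s$.

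The only point that needs a little care, and the closest thing to an obstacle in an otherwise routine argument, is the parity bookkeeping in the second case: oddness of $s$ is used twice, first to collapse the alternating sum into $\tfrac{n^s+1}{n+1}$ and then to guarantee $(-1)^{s-1}=1$ so that each residue modulo $n+1$ is exactly $+1$ rather than $\pm1$. The assumption $n>1$ serves only to make $n-1 \ge 1$ so that the geometric quotients are legitimate integers.
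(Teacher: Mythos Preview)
Your argument is correct and is exactly the standard verification: reduce the sum modulo $n\mp 1$ via $n\equiv \pm 1$ to see it is congruent to $s$, whence any common divisor divides $s$. The paper itself offers no proof, merely labelling the lemma ``obvious,'' so there is nothing further to compare.
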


\begin{theorem}\label{th2}
Let $k=2pq$ for some primes $p, q$  and $d=\gcd(p-1,q-1)$. If $d>4$ then $M(k) \le 3$.
\end{theorem}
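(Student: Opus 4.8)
The plan is to combine the structural information forced by $\tau(n)=2pq$ with the divisibility constraints of consecutive integers, the extra leverage coming from the fact that $d=\gcd(p-1,q-1)$ is even and, by hypothesis, at least $6$. By Lemma~\ref{lem4} we already know $M(2pq)\le 4$, so it suffices to rule out a run of four consecutive integers each with exactly $2pq$ divisors. First I would record the admissible residues modulo $8$. Since $p,q>3$, no divisor of $2pq$ equals $3$; hence an integer with $v_2=2$ (residue $4\pmod 8$) has $3\mid\tau$ and cannot have $2pq$ divisors, and the remark preceding this theorem already excludes residue $6\pmod 8$. Thus four consecutive admissible integers must occupy one of the two windows $\{7,0,1,2\}$ or $\{0,1,2,3\}$ of residues modulo $8$; writing $n_i\equiv i\pmod 8$ as before, in either window the member $n_2\equiv 2\pmod 8$ satisfies $n_2=2x^2$ with $x$ odd, its two odd neighbours differ by $2$, and $n_0\equiv 0\pmod 8$ carries a large power of $2$.

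The key structural step, and the place where the hypothesis enters, is the observation that every integer $n$ with $\tau(n)=2pq$ can be written as $n=w\,m^{d}$ with $w$ prime. Indeed, in any factorisation of $2pq$ into factors exceeding $1$ each factor $f$ lies in $\{2,p,q,2p,2q,pq,2pq\}$, and since $p\equiv q\equiv 1\pmod d$ one checks that $f-1\equiv 0$ or $1\pmod d$ for every such $f$; moreover in each admissible form exactly one prime carries an exponent $\equiv 1\pmod d$. Collecting that single prime $w$ and extracting a $d$-th power from the remaining prime factors gives $n=w\,m^{d}$. Applying this to $n_2=2x^2$ forces $w=2$ and $m$ odd, so $n_2=2m^{d}$ with $m$ odd and $x=m^{d/2}$; consequently $n_0=2(m^{d}-1)$ and the odd neighbours are $2m^{d}\pm 1$, each again of the form $w'(m')^{d}$.

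From here I would derive a contradiction from the rigidity of $d$-th powers. Factoring $m^{d}-1=(m^{d/2}-1)(m^{d/2}+1)$ (recall $d$ is even) into factors whose greatest common divisor is $2$, and using $n_0=w_0m_0^{d}$, the coprimality forces the two consecutive integers $(m^{d/2}\mp 1)/2$ each to be a prime times a $d$-th power; the analogous analysis of $2m^{d}\pm 1=w(m')^{d}$ produces differences of $d$-th powers equal to small constants. Here Lemma~\ref{lem5} is the finishing tool: applied to the geometric-sum cofactors of these differences of powers, it bounds the relevant greatest common divisors by a divisor of an odd exponent, which pins each cofactor down to a $d$-th power up to a bounded factor. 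For $d>4$ these near-$d$-th-power conditions on integers differing by $1$ or $2$ are incompatible once a few small values of $m$ are disposed of directly, and the run of four collapses.

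I expect the main obstacle to be the last step: organising the case distinction according to which member of the window is divisible by $3$ (and whether by $3^{d}$ or only to the first power), and, within each case, arranging the difference-of-powers equations so that Lemma~\ref{lem5} applies with an odd exponent while the constant on the right-hand side is genuinely too small to accommodate a $d$-th-power factor. Confirming that $d>4$ is exactly the threshold that makes the surviving cases impossible, rather than merely some larger bound, is the delicate quantitative point.
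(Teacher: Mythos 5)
Your setup is sound and matches the paper's: Lemma~\ref{lem4} reduces the problem to excluding a run of four consecutive integers with $2pq$ divisors, such a run must contain $n_0$ and $n_2$, and your observation that every $n$ with $\tau(n)=2pq$ has the form $w\,m^{d}$ with $w$ prime is correct and is exactly the leverage the paper extracts (there it appears as the statement that the odd part of $n_2$ is a $d$-th power, written $n_2=2a^{4c}$ or $n_2=2a^{2c}$). But the proposal stops where the real work begins. The claim that coprimality forces each of $(m^{d/2}\mp1)/2$ to be a prime times a $d$-th power is not justified: one of the two factors must absorb a power of $2$ of large exponent, and distributing the single exceptional prime $w_0$ and the factors of $m_0$ between the two coprime parts requires the auxiliary count that a number with $2pq$ divisors has at most three distinct prime factors, which you never invoke. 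More seriously, the concluding assertion that the resulting ``near-$d$-th-power conditions \dots are incompatible'' for $d>4$ is precisely the theorem to be proved; nothing in the proposal establishes it, and you flag it yourself as the open obstacle. The case distinction you anticipate (which member of the window is divisible by $3$) is also not the relevant one for this theorem --- that analysis belongs to Lemma~\ref{lem4} and Theorem~\ref{th1}.

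For comparison, the missing endgame in the paper splits on the arithmetic of $d$. If $4\mid d$, write $n_2=2a^{4c}$ with $c>1$, so $n_0=2(a^{c}-1)(a^{c}+1)(a^{2c}+1)$ with the three factors pairwise coprime away from $2$; Mih\u{a}ilescu's theorem shows none of them is a power of $2$, so $n_0$ acquires at least four distinct prime factors, which is impossible. If instead $d$ has an odd prime factor $c$, then $n_2=2a^{2c}$ and $n_0=2(a-1)(a+1)$ times two geometric-sum cofactors; the three-prime bound forces $n_0=2^{p-1}r_0^{q-1}s_0$, Mih\u{a}ilescu's theorem again pins down $a\mp1=2r_0$ with exponent $1$, Lemma~\ref{lem5} yields $r_0=c$, and the resulting identity $c=2^{p-4}+1$ contradicts $c\mid p-1$. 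Without some version of this chain --- in particular without Mih\u{a}ilescu's theorem (or an elementary surrogate) to exclude the power-of-two degenerations, and without a final quantitative step that actually uses $c\mid p-1$ --- your argument does not close.
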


 \begin{proof}
 We will keep to the notation introduced in the proof of Lemma~\ref{lem4}.
 
 By Lemma 4, if there exists a run of 4 consecutive integers each with $k$ divisors, then it would contain numbers $n_0$ and $n_2$. Possible prime factorizations of $n_2$ are $2r_2^{pq-1}, 2r_2^{p-1}s_2^{q-1}$. 
 
 First we note that every number in the run (not only $n_2$) cannot contain more than three different prime factors in its prime factorization.
 
 \begin{enumerate}
     \item  4 is a divisor of $d$. Then $n_2=2a^{4c}$ where $c>1$, $a$ is odd.
  \begin{equation}
 n_0=n_2-2=2(a^c-1)(a^c+1)(a^{2c}+1)  \label{21}
 \end{equation}

$\gcd(a^c-1, a^c+1)=2$, $a^{2c}+1$ is divisible by 2 and is not divisible by 4 and the factors of \eqref{21} have no common divisors except 2. Neither $a^c-1$ nor $a^c+1$ can be a power of 2. It follows from the Mih\u{a}ilescu's theorem (the former Catalan's conjecture)~\cite{MT} that $(8,9)$ is the only pair of consecutive integers, each of which is a nontrivial power. The fact that $a^c$ cannot be equal to 9 is verified directly. Thus, the prime factorization of $n_0$ includes the number 2 and at least three other different prime numbers. But this is impossible. Therefore, the numbers $n_0$ and $n_2$ cannot simultaneously belong to the run of consecutive integers having $2pq$ divisors.

\item  $c$ is an odd prime divisor of $d$. Then $n_2=2r_2^{p-1}s_2^{q-1}$ or $n_2=2r_2^{pq-1}$. Anyway $n_2=2a^{2c}$ for some odd $a$. 
\begin{equation}\begin{array}{c}
 n_0=n_2-2=2(a^c-1)(a^c+1)=\\2(a-1)(a^{c-1}+a^{c-2}+\dots+a+1)(a+1)(a^{c-1}-a^{c-2}+\dots-a+1)  \label{22}
\end{array} \end{equation}

Again, none of the numbers $a^c-1$, $a^c+1$ can be a power of 2. Since  $(a^c-1)/2$ and $(a^c+1)/2$ are coprime, there are at least 3 different primes in the prime factorization of $n_0$. Exponent of the prime 2 cannot be less than 4. Thus the only possible representation of $n_0$ is $2^{p-1}r_0^{q-1}s_0$ for some odd primes $r_0$ and $s_0$. 
 
Using this expression in \eqref{22} we obtain the following two cases:
 \begin{equation}
 a-1=2^{p-3},\quad a^{c-1}+a^{c-2}+\dots +a+1=s_0, \quad a^c+1=2r_0^{q-1} \label{p1}
 \end{equation}
\begin{equation}
 a+1=2^{p-3}, \quad a^{c-1}-a^{c-2}+\dots -a+1=s_0, \quad a^c-1=2r_0^{q-1} \label{p2}
 \end{equation}

Consider the equalities \eqref{p1}.  The third of them implies that
 $$a+1=2r_0^{\alpha},\quad  a^{c-1}-a^{c-2}+\dots -a+1=r_0^{\beta}.$$   
From the equalities $a-1=2^{p-3}$ and $a+1=2r_0^{\alpha}$ we get $r_0^{\alpha}-2^{p-4}=1$. It is easely verified that $r_0\not=3$. Hence, by Mih\u{a}ilescu's theorem, $\alpha=1$.
From Lemma 5 we get $r_0=c$. Hence $a+1=2c$ and $a-1=2c-2=2^{p-3}$. Thus $c=2^{p-4}+1$. Since $c$ is odd prime factor of $p-1$ then $p \ge 7$. Therefore $2^{p-4}+1 > p-1$ and $c$ cannot divide $p-1$. Thus, our assumption is refuted so $M(k) \le 3$.

The option \eqref{p2} is considered similarly.
\end{enumerate} 
\end{proof}


\section{Exact values of $M(k)$ for some even $k$}

It is clear that in order to obtain the exact value $M(k)$ for a given $k$ it is sufficient to give an example of a run of consecutive integers having $k$ divisors whose length is equal to the upper bound for $M(k)$. 
 The corresponding upper bounds are given in Corollary 1 and Theorems 1 and 2.
 
 
 Table~\ref{table1} gives the exact values of all known $M(k)$ for even $k$.
 
 Let us consider in more detail the rows of Table~\ref{table1} from the bottom up.
 
Let $k$ be divisible by 4 and is not divisible by 3. The most difficult task is to find 7 consecutive numbers having $k$ divisors where $k$ has large prime factors. That is why Table~\ref{table1} contains, for instance, 2560 but not 92.
 
In the process of searching such a chain we used a certain technique. 
Consider the search for a chain of 7 consecutive integers having $k$ dividers, for the case $k = 76$. 

Applying Chinese remainder theorem one can select the moduli so that each of 7 consecutive integers will be the product of a number having 38 divisors and some other one.  For instance, taking $a=3\cdot 17^{18}$, 
\begin{scriptsize}
$$s_0=283,068,009,891,526,033,048,495,522,741,168,151,673,505,189,503,207,144,294,857,813,203,632,566,487,752,480,287\,,$$ 
\end{scriptsize}
$m=2^{19}\cdot 3^{17}\cdot 5^{18}\cdot 7^{18}\cdot 11^{18}\cdot 13^{18}\cdot 19\cdot 23$, 
$n_j=a(s_0+jm)$
we provide  
\begin{equation}
\begin{array}{l}
n_j=3\cdot 17^{18} q_{0j};\\
n_j+1=2\cdot 5^{18} q_{1j};\\
n_j+2=23\cdot 7^{18} q_{2j};\\
n_j+3=3\cdot 2^{18} q_{3j};\\
n_j+4=19\cdot 11^{18} q_{4j};\\
n_j+5=2\cdot 13^{18} q_{5j};\\
n_j+6=5\cdot 3^{18} q_{6j}.\\
\end{array} 
\label{k76_1}
\end{equation} 
To obtain the required run, it is sufficient to find $j$ such that the numbers $q_{ij}$ are prime for all $i\in \{0,1,2,3,4,5,6\}$.

\begin{table}[!ht]
\begin{center}
\begin{tabular}{|p{8mm}|p{138mm}|}
\hline $M(k)$ & $k$ \\
\hline \hline 2 & 2 \\
\hline 3 & 4, 10, 14, 22, 26, 34, 38, 46, 50, 58, 62, 74, 82, 86, 94, 98, 106, 118, 122, 130, 134, 142, 146, 158, 166, 170, 178, 182, 194, 202, 206, 214, 218, 226, 242, 254, 262, 266, 274, 278, 290, 298, 302, 314, 326, 334, 338, 346, 358, 362, 370, 382, 386, 394, 398, 410, 422, 434, 442, 446, 454, 458, 466, 478, 482, 494, 502, 514, 518, 526, 530, 538, 542, 554, 562, 566, 578, 586, 602, 610, 614, 622, 626, 634, 662, 674, 682, 694, 698, 706, 718, 722, 730, 734, 746, 754, 758, 766, 778, 794, 802, 806, 818, 838, 842, 854, 862, 866, 878, 886, 890, 898, 902, 938, 962, 970, 986, 1010, 1022, 1058, 1066, 1090, 1106, 1118, 1130, 1178, 1258, 1342, 1358, 1370, 1378, 1394, 1406, 1442, 1490, 1526, 1562, 1570, 1586, 1634, 1682, 1730, 1742, 1778, 1802, 1810, 1898, 1922, 1930, 1946, 1970, 2054, 2074, 2114, 2146, 2198, 2222, 2282, 2290, 2294, 2314, 2318, 2330, 2378, 2410, 2482, 2494, 2522, 2534, 2542, 2546, 2570, 2626, 2666, 2678, 2690, 2702, 2738, 2770, 2774, 2786, 2810, 2834, 2882, 2930, 2938, 2954, 3002, 3026, 3034, 3074, 3082, 3122, 3182, 3206, 3298, 3302, 3322, 3362, 3374, 3434, 3562, 3614, 3686, 3698, 3706, 3782, 3794, 3842, 3874, 3878, 3914, 3922, 3926, 3962, 3982, 4082, 4094, 4118, 4142, 4154, 4202, 4234, 4238, 4298, 4346, 4382, 4402, 4418, 4498, 4514, 4526, 4634, 4642, 4658, 4706, 4718, 4826, 4886, 4898, 4958, 5002, 5018, 5066, 5122, 5138, 5162, 5174, 5222, 5246, 5282, 5302, 5338, 5402, 5486, 5522, 5618, 5626, 5738, 5762, 5798, 5822, 5846, 5858, 5882, 5894, 5954, 5962, 5966, 5986, 6014, 6062, 6106, 6146, 6154, 6182, 6194, 6262, 6266, 6278, 6322, 6386, 6466, 6482, 6554, 6562, 6586, 6698, 6758, 6794, 6818, 6878, 6962, 6986, 7046, 7178, 7202, 7282, 7298, 7322, 7334, 7358, 7366, 7442, 7474, 7562, 7622, 7658, 7738, 7786, 7874, 7922, 7946, 7954, 7982, 7994, 8018, 8066, 8078, 8122, 8138, 8174, 8194, 8282, 8342, 8362, 8374, 8414, 8474, 8542, 8642, 8806, 8618, 8662, 8702, 8738, 8762, 8822, 8858, 8906, 8938, 8978, 9074, 9106, 9154, 9158, 9262, 9266, 9362, 9374, 9398, 9434, 9482, 9542, 9554, 9638, 9698, 9718, 9734, 9782, 9854, 10034, 10082, 10106, 10138, 10282, 10286, 10298, 10322, 10498, 10526, 10586, 10634, 10706, 10742, 10754, 10858, 10922, 11174, 11234, 11426, 11458, 11534, 11618, 11666, 11834, 11842, 11894, 11926, 11954, 11966, 12002, 12062, 12238, 12322, 12338, 12382, 12482, 12566, 12578, 12806, 12986, 12994, 12998, 13066, 13082, 13298, 13394, 13502, 13802, 13826, 13862, 13886, 14018, 14162, 14198, 14282, 14342, 14606, 14726, 14842, 14942, 15038, 15226, 15326, 15494, 15562, 15566, 15614, 15662, 15826, 15914, 15982 \\
\hline 5 & 6, 18, 30, 42, 54, 66, 78, 102, 114, 138, 174, 186, 222, 246, 258, 282  \\
\hline 7 & 8, 16, 20, 28, 32, 40, 44, 52, 56 , 64, 68, 76, 80, 88, 100, 104, 112, 128, 140, 160, 176, 196, 200, 220, 224, 256, 280, 320, 352, 400, 448, 500, 512, 560, 640, 800, 896, 1024, 1120 1280, 1792, 2048, 2560, 4096 \\
\hline \end{tabular}
\caption{Exact values of $M(k)$ and corresponding even $k$}
\label{table1}
\end{center}
\end{table}

An empirical calculation of the probability shows that it would most likely take about $10^{12}$ steps to find such $j$. Therefore, we preferred another method of searching the required run of consecutive integers having $76$ divisors.

Let $a=3 \cdot 11^{18}$, 
\begin{scriptsize}
$$s_0=57,367,831,813,930,710,416,942,173,714,046,021,671,610,774,684,867,929,921,010,461,281,856,875,149,028,902,647\,,$$ 
\end{scriptsize}
$m=2^{19}\cdot 3^{17}\cdot 5^{18}\cdot 7^{18}\cdot 13^{18}\cdot 17^{18}$. Then for each $j$ we have
\begin{equation*}
\begin{array}{l}
n_j=3\cdot 11^{18} r_{0j};\\
n_j+1=2\cdot 17^{18} r_{1j};\\
n_j+2=7^{18} r_{2j};\\
n_j+3=3\cdot 2^{18} r_{3j};\\
n_j+4=5^{18} r_{4j};\\
n_j+5=2\cdot 13^{18} r_{5j};\\
n_j+6=3^{18} r_{6j}.\\
\end{array} 
\end{equation*} 
Positive integer $j$ produces the required run if $r_{0j}, r_{1j}, r_{3j}, r_{5j}$ are prime and $r_{2j}, r_{4j}, r_{6j}$ are the products of two different primes. The probability of this event is much greater than the probabilty of simultaneous primality of numbers $q_{ij}$ in \eqref{k76_1}. Unfortunately, the factorization of numbers from the range of interest may take several hours (while primality testing is a very fast procedure). 
However we mostly avoided full factorization in our research.
First of all note that each third multiple of $3^{18}$ is divisible by $3^{19}$, each fifth multiple of $5^{18}$ is divisible by $5^{19}$, and so on. Such a preliminary sieve immediately throws out about 64$\%$ of candidates of $j$.

The main filtation process verifies the primality of the numbers $r_{0j}, r_{1j}, r_{3j}, r_{5j}$ using probabilistic tests. It speeds up the verification and does not lose the possible candidate numbers. The final set is checked using deterministic tests.

At the third stage, the partial factorization of $r_{2j}, r_{4j}, r_{6j}$ is performed. We are only interested in composite numbers that do not have small prime factors and in numbers that have one small prime factor, provided that the remaining factor is also prime.

At the fourth stage we perform the full factorization of $r_{2j}, r_{4j}, r_{6j}$. 

Finally we inspect all numbers of the run by full factorization. 

We have found the run of 7 consecutive integers having 76 divisors by inspection of about $1.2\cdot 10^9$ values of $j$. At the same time, partial factorization was required only for 181 values of $j$. And full factorization was needed only for 4 values of $j$. Note that 3 candidates were rejected just after the factorization of $r_{2j}$. 

The procedure described above allowed us to find 7 consecutive integers having 76 divisors. The first of them is 
\begin{scriptsize}
$$\begin{array}{ll}
2,775,270,598,603,581,528,049,602,020,344,980,538,485,734,694,771,608,751,022,026,551,506,129,981,330,662,646,538,\\924,360,751,256,259,918,212,890,621.
\end{array}$$
\end{scriptsize}
This proves that $M (76) = 7$.

Let $p$ be an odd prime. The method of finding 5 consecutive integers having $6p$ divisors is, in many respects, similar to the above case. But it requires the solution of the system of quadratic congruences.

Arguing as in the proof of Theorem~\ref{th1}, we come to the following system
\begin{equation}
\begin{cases}
2q_1^{p-1}x^2-1 \equiv 0 \pmod{q_0^{p-1}r_0^2},\\
2q_1^{p-1}x^2+1 \equiv 0 \pmod{q_2^{p-1}r_2^2},\\ 
2q_1^{p-1}x^2+2 \equiv 0 \pmod{4q_3^{p-1}},\\
2q_1^{p-1}x^2+3 \equiv 0 \pmod{q_4^{p-1}r_4^2}
\end{cases}
\label{syst} 
\end{equation}
where $q_0,q_1,q_2,q_3,q_4,r_0,r_2,r_4$ some odd primes.

If we let $q_0=7, q_2=3, q_3=5, q_4=11, r_0=17, r_2=19, r_4=29$ then \eqref{syst} has solutions for all odd primes $p$ and $q_1$. Each solution of \eqref{syst} gives a sequence of runs of consecutive integers such that
\begin{equation*}
\begin{array}{l}
n_j=7^{18} 17^2 s_{0j};\\
n_j+1=2\cdot q_1^{18} s_{1j}^2;\\
n_j+2=3^{18} 19^2 s_{2j};\\
n_j+3=4\cdot 5^{18} s_{3j};\\
n_j+4=11^{18} 23^2 3_{4j};\\
\end{array} 
\end{equation*} 
It remains to find $j$ such that $s_{0j},s_{1j},s_{2j},s_{3j},s_{4j}$ will be simultaneously prime. 

The runs of 5 consecutive integers each with $6p$ divisors for $p \in \{5, 7, 11, 13, 17, 23, 31, 67\}$ have been managed with applying system~\eqref{syst}. The runs for $p \in \{19, 29, 37, 41, 43, 47, 53, 59, 61\}$ have been managed with applying of another system similar to system~\eqref{syst}.

The greatest amount of integers for which the exact value of $M (k)$ is proved belong to classes of 2 and 10 modulo 12. 

One can see that there are many $k$ in Table~\ref{table1} such that $k=2pq$ where $\gcd(p-1,q-1)=4$   while Theorem 2 provides $M(k)\le 3$ only for case $\gcd(p-1,q-1)>4$. The fact is that for each of such $k$, represented in Table~\ref{table1}, the inequality $M(k)\le 3 $ is easily verified. 

Suppose, for example, $k=10858$. We will keep the notation of Theorem 2. If there are exist 4 consecutive integers each with $10858=2\cdot61\cdot89$ divisors then $n_0$ and $n_2$ are among these integers. Possible prime factorizations of $n_0$ are $2^{88}r_0^{60}s_0$ or $2^{60}r_0^{88}s_0$ (variants with fewer prime divisors are obviously not suitable). The possibilities for $n_2$ are $2r_2^{60}s_2^{88}$ and $2r_2^{5428}$. Therefore
\begin{equation}
 n_0=n_2-2=2(r_2^{15}s_2^{22}-1)(r_2^{15}s_2^{22}+1)(r_2^{30}s_2^{44}+1)
\label{10858_1} 
\end{equation}
or
\begin{equation}
 n_0=n_2-2=2(r_2^{1357}-1)(r_2^{1357}+1)(r_2^{2714}+1)
\label{10858_2} 
\end{equation}
The only common divisor of factors on the right-hand side of equalities \eqref{10858_1} and \eqref{10858_2} is 2. The fact that none of the factors  can be equal to either $2^{57}$ or $2^{85}$ can be verified directly.

An updatable table containing the numbers opening the runs of $M(k)$ consecutive equidivisible integers for every even $k$ such that exact value of $M(k)$ is proved is located at \cite{MM}.
Note that for $k=2p$ this table represents the smallest numbers opening corresponding runs. These numbers are represented in the sequence A274639 at the OEIS (\cite{OEIS})

\section{Long runs of equidivisible numbers}

The results stated in Section~\ref{sec2} show that, $M(k)$ can be  greater than 7 only provided that $k$ is multiple of 12. 
Firstly let $k=12$.

\begin{lemma}\label{lem6}
$13 \le M(12) \le 15$.
\end{lemma}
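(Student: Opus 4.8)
The plan is to handle the two inequalities separately. The lower bound $M(12)\ge 13$ will follow from exhibiting an explicit run of $13$ consecutive integers each with exactly $12$ divisors; for the upper bound I will show that no run of $16$ consecutive integers can have $12$ divisors each, which gives $M(12)\le 15$.

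For the upper bound the first step is a $2$-adic valuation analysis. Writing $v_2(n)$ for the exponent of $2$ in $n$ and using \eqref{tau}, a number with $\tau(n)=12$ must have $v_2(n)\in\{0,1,2,3,5,11\}$; in particular $v_2(n)=4$ is impossible, since $5\nmid 12$. Now suppose, for contradiction, that $16$ consecutive integers each have $12$ divisors. Among any $16$ consecutive integers there is exactly one multiple of $16$ and exactly two multiples of $8$. The unique multiple of $16$ has $v_2\ge 4$, so by the above its valuation is $5$ or $11$. If the valuation is $11$ the number equals $2^{11}=2048$ (as $(11+1)\tau(m)=12$ forces odd part $m=1$), and this case is dismissed by direct inspection of the neighbours of $2048$. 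If the valuation is $5$, the number is $N=32q$ with $q$ an odd prime, since $6\,\tau(m)=12$ forces the odd part $m=q$ to be prime.

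The crux is the second multiple of $8$ in the window. It equals $N\pm 8=8(4q\mp 1)$ and has $v_2=3$, so having $12$ divisors forces its odd part to be the square of a prime, i.e. $8(4q\mp1)=8r^2$. But $4q-1\equiv 3\pmod 4$ is never a square, while $4q+1=r^2$ gives $4q=(r-1)(r+1)$, i.e. $q=\frac{r-1}{2}\cdot\frac{r+1}{2}$, a product of two consecutive integers, which is prime only for $q=2$ --- contradicting that $q$ is odd. Exactly one of $N\pm 8$ lies in the run, and both alternatives are impossible, so no run of length $16$ exists and $M(12)\le 15$. I expect this valuation-plus-parity argument to be the cleanest part; the only care needed is the bookkeeping of how many multiples of each power of $2$ occur in a window of $16$, together with the isolated check at $2048$.

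For the lower bound I would produce, and verify by factorization, a specific block of $13$ consecutive integers each of the form $p^{11}$, $p^5q$, $p^3q^2$, or $p^2qr$. This is the genuinely hard part: unlike the upper bound it is not a finite local argument but requires a computer search (for instance a Chinese-remainder and sieve construction of the kind used elsewhere in this paper) to locate such a block, and the gap between the resulting value $13$ and the proven ceiling $15$ is left open by this lemma.
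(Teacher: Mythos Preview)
Your upper-bound argument is correct and in fact more direct than the paper's. Both proofs identify, inside a putative long run, the number $N$ with $v_2(N)=5$ (so $N=32q$, $q$ an odd prime) and its neighbour with $v_2=3$ (so of the form $8r^2$). The paper rules out their coexistence by a mod-$3$ case analysis: one of $n_6,n_7,n_8$ is divisible by $3$, and each alternative forces $r_0=3$ or $r_8=3$, yielding the tiny values $n_0=96$ or $n_8=72$, dismissed by inspection. You instead solve $4q\pm1=r^2$ directly: $4q-1\equiv3\pmod4$ is never a square, and $4q+1=r^2$ forces $q=\tfrac{r-1}{2}\cdot\tfrac{r+1}{2}$, a product of consecutive integers, prime only for $q=2$. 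This bypasses the mod-$3$ casework entirely and also subsumes the paper's separate exclusion of the residue class $24\pmod{32}$ (stated there as ``$8x^2\equiv24\pmod{32}$ has no solutions'') into the same Diophantine step. One cosmetic slip: $N\pm8=8(4q\pm1)$, not $8(4q\mp1)$; your subsequent case split treats the correct quantities, so the argument is unaffected.

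For the lower bound your plan matches the paper's, but as written the proof is incomplete: you describe the search strategy without producing an actual witness. The paper supplies the explicit starting value
\[
n=99\,949\,636\,937\,406\,199\,604\,777\,509\,122\,843,
\]
and without such a certificate (together with verification of the thirteen factorizations) the inequality $M(12)\ge13$ is not established.
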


\begin{proof}
Combining statements of Lemmas~\ref{lem1} and~\ref{lem2} we get inequality $M(12)\le23$. Since congruence $8x^2\equiv24 \pmod{32}$ has not solutions a run of 23 consecutive integers each with 12 divisors (if it exists) must open with number congruent to 25 modulo 32. Let $n_{25}, n_{26}, \dots, n_{15}$ (where $n_i$ is congruent to $i$ modulo 32) are the numbers of such run. Then $n_0=2^5r_0$ and $n_8=8r_8^2$ for some odd primes $r_0$ and $r_8$. 

One of three consecutive integers must be divisible by 3. If $n_8$ is multiple of 3 then $n_8=72$. But 72 does not belong to required run. Since congruence $8x^2-1\not\equiv0 \pmod3$, $n_7$ cannot be multiple of 3. Finally, if $n_6$ is multiple of 3 then $n_0=96$. But the neighbors of 96 have not 12 divisors.

Thus $n_0$ and $n_8$ cannot at the same time belong to a required run and $M(12)\le15$.

On the other hand number $n=99,949,636,937,406,199,604,777,509,122,843$ starts the run of 13 consecutive integers with 12 divisors each.
\end{proof}

\begin{lemma}\label{lem7}
$11 \le M(36) \le 15$.
\end{lemma}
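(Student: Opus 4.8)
The plan is to establish the two inequalities separately, following the template set by Lemma~\ref{lem6}. The upper bound $M(36)\le 15$ should come from the same machinery already used for $M(12)$. Since $36=2^2\cdot 3^2$ is divisible by $2^2$ but not $2^3$, Lemma~\ref{lem2} gives $M(36)\le 2^{2^2+1}-1=31$, and Lemma~\ref{lem1} with the smallest prime not dividing $36$ (namely $p=5$) gives $M(36)\le 2^5-1=31$ as well. So the crude bounds only reach $31$, and I must sharpen this to $15$ by a finer analysis modulo a power of $2$. The key observation will be that an integer $n$ with $\tau(n)=36$ and a high power of $2$ in its factorization is heavily constrained: writing $n=2^s m$ with $m$ odd, the factor $(s+1)$ must divide $36$, so $s\in\{1,2,3,5,8,11,17,35\}$. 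I would examine residues modulo $32$ (or a suitable $2^t$) and show, exactly as in the $M(12)$ argument, that two residues which would both have to appear in a run of length $16$ force one of the numbers into a shape like $2^s\cdot(\text{odd square})$ whose neighbor cannot also have $36$ divisors; the divisibility-by-$3$ case analysis (one of three consecutive integers is a multiple of $3$, but $2x^2-1\not\equiv 0\pmod 3$, etc.) should again eliminate the offending configuration and collapse $31$ down to $15$.

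For the lower bound $11\le M(36)$, the strategy is purely constructive: exhibit an explicit run of $11$ consecutive integers each having exactly $36$ divisors. This matches the abstract's claim of ``the run of 11 consecutive positive integers each with exactly 36 divisors.'' I would produce such a run by the Chinese-remainder-plus-sieving technique described in Section~3 for $k=76$: fix a modulus built from high prime powers so that each of the eleven consecutive integers is forced into a template $n_j+i = (\text{fixed prefactor with many divisors})\cdot(\text{cofactor})$, where the cofactor's required factorization makes $\tau=36$ automatic once the cofactor is prime (or a controlled product of small primes). One then sieves over $j$, using fast probabilistic primality tests on the cofactors and only invoking full factorization on the rare surviving candidates, until all eleven cofactors simultaneously attain the prescribed shape. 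The proof of the lemma then consists simply of displaying the resulting starting integer $n$ and remarking that $\tau(n+i)=36$ for $i=0,1,\dots,10$ by direct factorization.

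The main obstacle is the upper-bound argument. The $M(12)$ proof in Lemma~\ref{lem6} exploited that $12=2^2\cdot 3$ has a single factor of $3$, so numbers congruent to $0\pmod 3$ are pinned down to tiny explicit values (like $72$ or $96$) that fall outside any long run. For $k=36$ the factor of $3$ is squared, so the ``multiple of $3$'' subcase is looser: $3\mid n$ no longer forces $9\mid\tau(n)$ to behave as cleanly, and the candidate small values will not be so trivially excluded. I expect the delicate part to be ruling out, among the residues surviving the modulo-$32$ sieve, the configurations where the $3$-adic and $2$-adic constraints interact — specifically, showing that an integer of the form $2^5\cdot(\text{odd prime})$ or $8\cdot(\text{odd square})$ adjacent to a multiple of $9$ cannot be extended, which will require the same kind of square-versus-non-square parity argument ($n$ is a square iff $\tau(n)$ is odd) applied to the odd part. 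I would first nail down the exact forced residue class modulo $32$ that any run of length $16$ must open with (checking which residues admit a solution to the relevant $\tau=36$ shape), and then close the argument by the divisibility-by-$3$ dichotomy, just as in the preceding lemma.
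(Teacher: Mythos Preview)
Your overall plan matches the paper's: exhibit an explicit run of $11$ for the lower bound, and for the upper bound work modulo $32$, pin down $n_8=8x^2$ (odd part a square since $\tau$ of it must be $9$), observe $n_7\not\equiv 0\pmod 3$, and split on whether $3\mid n_6$ or $3\mid n_8$. That is exactly what the paper does.

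However, your sketch stops short of the two concrete observations that actually close each case, and these are not routine fill-ins. In the case $3\mid n_6$ (hence $3\mid n_0$), the paper uses the identity $n_0=n_8-8=8(x-1)(x+1)$: since $n_0$ must be one of $2^8\cdot 3\cdot r$, $2^5\cdot 3^2\cdot r$, or $2^5\cdot 3\cdot r^2$ (with $r$ an odd prime), matching this against $8(x-1)(x+1)$ forces $x$ into a handful of explicit small values, each of which is checked directly. Your proposal mentions ``$2^5\cdot(\text{odd prime})$'', which has $12$ divisors, not $36$; the correct shapes for $n_0$ when $k=36$ are the three above, and the factorization $8(x-1)(x+1)$ is the lever you need.

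In the case $3\mid n_8$, the paper's key move is not about $n_0$ at all: since $n_8=8x^2$ and $3\mid x$, one has $9\mid n_8$, so $3\parallel n_2$; combined with $2\parallel n_2$ this forces the odd-coprime-to-$3$ part of $n_2$ to have $9$ divisors, i.e.\ $n_2=6y^2$. Then $n_8=n_2+6$ gives $6y^2+6=72z^2$, i.e.\ $y^2+1=12z^2$, which has no solutions modulo $4$. This Pell-type obstruction is the crux of the second case and is absent from your outline.

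For the lower bound, the paper simply records the starting integer
\[
12{,}821{,}655{,}678{,}011{,}960{,}184{,}516{,}598{,}560{,}606{,}241{,}547{,}734{,}025{,}340{,}946{,}441{,}558{,}430{,}971,
\]
so your constructive strategy is on target; you just need to supply the actual number.
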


\begin{proof}
We continue to adhere to the notation adopted earlier.

  As earlier we have $n_8=8x^2$ for some odd $x$ and $n_7$ isn't divisible by 3.  
  
 \begin{enumerate}
 \item
 Let $n_6$ is multiple of 3. Then we have 3 possibilities for $n_0$: $n_0=2^8\cdot 3r_0$; $n_0=2^5\cdot 9r_0$; $n_0=2^5\cdot 3r_0^2$. On other hand $n_0=n_8-8=8(x-1)(x+1)$. Let for instance $8(x-1)(x+1)=2^8\cdot 3r_0$. Then $x-1=48, x+1=2r_0$ or $x+1=48, x-1=2r_0$. Immediate inspection of each of these cases rules that $n_0$ does not belong to required run. The fact that the other two possibilities for $n_0$ are also impossible is proved similarly.   

\item Let $n_8$ be a multiple of 3. Then it is multiple of 9. Hence $n_2=6y^2$ for some odd $y$. Therefore $6y^2+6=72x^2$, which has no integer solutions. 
\end{enumerate} 

Thus $M(36)\le 15$. Taking into account that 
\begin{footnotesize}
$$12,821,655,678,011,960,184,516,598,560,606,241,547,734,025,340,946,441,558,430,971$$
\end{footnotesize}
starts a run of 11 consecutive numbers each with 36 divisors we obtain the required inequality. 
\end{proof}

The longest runs of consecutive equidivisible integers have been found for $k = 24$ and $k = 48$.

\begin{lemma}\label{lem8}
$17 \le M(48) \le 31$.
\end{lemma}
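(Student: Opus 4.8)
The plan is to establish the two inequalities separately, following the pattern set by Lemmas~\ref{lem6} and~\ref{lem7}. For the upper bound $M(48)\le 31$, I would begin by combining Lemmas~\ref{lem1} and~\ref{lem2}: since $48=2^4\cdot 3$, the smallest prime not dividing $48$ is $p=5$, so Lemma~\ref{lem1} gives $M(48)\le 2^5-1=31$ immediately. This already yields the stated bound, so no further work on the $2$-adic side should be strictly necessary; however, to be safe I would double-check whether Lemma~\ref{lem2} improves anything. Here $2^4\mid 48$ but $2^5\nmid 48$, so $s=4$ and Lemma~\ref{lem2} gives only $M(48)\le 2^{17}-1$, which is far weaker. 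Thus the clean bound $31$ comes entirely from Lemma~\ref{lem1}, and this half of the proof is essentially immediate.

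For the lower bound $17\le M(48)$, the task is purely constructive: I must exhibit an explicit run of $17$ consecutive integers each having exactly $48$ divisors. The approach mirrors the search technique described in Section~3 for $k=76$ and for the $6p$ cases. I would fix a prescribed factorization pattern for each of the $17$ consecutive residues, using the Chinese Remainder Theorem to force small prime-power factors (for instance arranging that $n_j+i$ is divisible by a chosen prime power $p_i^{e_i}$ so that the cofactor must contribute a controlled number of divisors). Since $48 = 16\cdot 3 = 48, 24\cdot 2, 12\cdot 4, \ldots$ admits several factorization shapes, each residue class $n_j+i$ can be assigned a template such as $p^{a}q^{b}\cdots\,(\text{prime})$ whose divisor count is forced to be $48$ once the remaining cofactor is prime (or a product of two distinct primes, as appropriate). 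I would then search over $j$ in the resulting arithmetic progression, applying a preliminary sieve to discard $j$ for which some cofactor is automatically divisible by an unwanted prime, and using fast probabilistic primality tests before confirming with deterministic ones.

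The main obstacle, as the paper itself emphasizes for $k=76$, is the computational cost: requiring $17$ simultaneous primality (or near-primality) conditions makes the density of admissible $j$ extremely small, so the search must be organized to minimize expensive full factorizations. I would therefore favor templates in which as many of the $17$ cofactors as possible are required only to be \emph{prime} (a fast test) rather than products of two primes (which may demand factorization), exactly as was done to avoid the $10^{12}$-step estimate in the $k=76$ construction. The heart of the construction is choosing the $17$ templates compatibly: their imposed congruences must be jointly solvable via CRT, the forced small-prime exponents across the residues must not collide (two consecutive terms cannot both be forced divisible by the same large prime to the same power without contradiction), and each template's divisor count must equal $48$. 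Once a suitable $j$ is located, verifying $\tau(n_j+i)=48$ for all $i\in\{0,1,\dots,16\}$ by full factorization completes the proof; it then suffices to record the opening integer of the run, just as in Lemmas~\ref{lem6} and~\ref{lem7}.
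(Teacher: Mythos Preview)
Your approach is correct and matches the paper's: the upper bound comes directly from Lemma~\ref{lem1} (smallest prime not dividing $48$ is $5$), and the lower bound is established by exhibiting an explicit run of $17$ consecutive integers with $48$ divisors. The paper's proof is terser than your outline---it simply records the opening integer $6{,}611{,}413{,}170{,}876{,}398{,}465{,}463{,}663{,}454{,}441{,}440{,}157{,}066{,}140$ and cites Lemma~\ref{lem1}, deferring the description of the CRT-based search and sieving heuristics to the discussion following the lemmas---so to complete your proof you need only supply the actual number.
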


\begin{proof}
$6,611,413,170,876,398,465,463,663,454,441,440,157,066,140$ starts a run of 17 consecutive Integers with exactly 48 divisors. On the other hand, by Lemma~\ref{lem1} $M(48)\le 31$.
\end{proof}

\begin{lemma}\label{lem9}
$17 \le M(24) \le 31$.
\end{lemma}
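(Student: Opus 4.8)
The plan is to establish the two inequalities separately, in complete parallel with the proof of Lemma~\ref{lem8}. The upper bound $M(24)\le 31$ is immediate from Lemma~\ref{lem1}: since $24=2^3\cdot 3$, both of the two smallest primes $2$ and $3$ divide $24$, so the least prime $p$ with $p\nmid 24$ is $p=5$, whence $M(24)\le 2^5-1=31$. (Note that Lemma~\ref{lem2}, applied with $s=3$, would only give the weaker bound $2^{9}-1$, so it is Lemma~\ref{lem1} that is operative here.) No further work is needed for the upper bound.

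All the substance lies in the lower bound $M(24)\ge 17$, which I would prove by exhibiting one explicit run of $17$ consecutive integers each having exactly $24$ divisors. First I would list the admissible exponent patterns: an integer has $24$ divisors precisely when its sorted exponent vector is one of $(23)$, $(11,1)$, $(7,2)$, $(5,3)$, $(5,1,1)$, $(3,2,1)$, or $(2,1,1,1)$. A key preliminary constraint governs the even members of the run: if $n+i$ is even then the power of $2$ it carries, say $2^{b}$, contributes the factor $b+1$ to $\tau(n+i)$, so we must have $b+1\mid 24$, i.e.\ $b\in\{1,2,3,5,7,11,23\}$; in particular no member may satisfy $n+i\equiv 16\pmod{32}$. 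This forces the run to occupy a suitable window of residues modulo a power of $2$, and that window must be fixed before anything else.

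Having fixed such a template, I would follow the Chinese-remainder technique already used above for $k=76$: to each of the $17$ positions $n+i$ I assign a prescribed large prime power $p_i^{a_i}$ (with $a_i$ drawn from $\{23,11,7,5,3,2\}$) chosen so that the complementary cofactor is pinned to a small prescribed shape, and then solve the resulting system to obtain an arithmetic progression $n\equiv s_0\pmod m$ along which all seventeen factorizations hold simultaneously. The genuine obstacle is the ensuing computational search for a value of $j$ for which every cofactor of $n=s_0+jm$ actually attains its target shape (a prime, or a product of a prescribed number of primes). Exactly as for $k=76$, this is managed by a multi-stage filter---a preliminary sieve to discard progressions in which some prescribed prime power is accidentally exceeded, fast probabilistic primality tests to eliminate nearly all candidates, and partial then full factorization only for the rare survivors. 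Because the run has length $17$ and all seventeen divisor conditions must hold at once, this search is the hard and resource-intensive step; once a witnessing $n$ is found, verifying $\tau(n+i)=24$ for $0\le i\le 16$ by direct factorization completes the proof.
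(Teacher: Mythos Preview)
Your treatment of the upper bound is correct and identical to the paper's: Lemma~\ref{lem1} with $p=5$ gives $M(24)\le 2^5-1=31$, and you rightly note that Lemma~\ref{lem2} is not the operative bound here.

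For the lower bound, your proposal describes precisely the methodology the paper uses (CRT template, preliminary sieve, probabilistic primality screening, partial and then full factorization), so the \emph{approach} matches. The gap is that you never actually produce the witness. The entire content of the lower bound is the single explicit integer
\[
n=768{,}369{,}049{,}267{,}672{,}356{,}024{,}049{,}141{,}254{,}832{,}375{,}543{,}516,
\]
together with the verified prime factorizations of $n,n+1,\dots,n+16$ showing each has exactly $24$ divisors. A description of the search procedure, however accurate, is not a proof of $M(24)\ge 17$; the proof \emph{is} the number (and its certified factorizations). As written, your proposal establishes only the upper bound; the lower bound remains unproved until the explicit $n$ is exhibited and checked.
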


\begin{proof}
By Lemma~\ref{lem1}, $M(24)\le31$. 
Since $n=768,369,049,267,672,356,024,049,141,254,832,375,543,516$ starts the run of 17 consecutive integers each with 24 divisors $M(24)\ge17$.
\end{proof}

Taking into account that the chains indicated in Lemmas~\ref{lem8} and~\ref{lem9} are the longest known one we will describe them in more detail. The prime factorizations of 17 numbers each with 24 divisors are:
\begin{equation*}
\begin{array}{l}
n=2^2\cdot 44449284079\cdot 105284315902411137115055983\cdot 41047\\
n+1=17^2\cdot 426420806453\cdot 10362291056766174583433\cdot 601697\\
n+2=2\cdot 3\cdot 37^2\cdot 93543833609407396642810949751014411437\\
n+3=7\cdot 29^2\cdot 4175425569259650195904030211773043\cdot 31259\\
n+4=2^5\cdot 5\cdot 4802306557922952225150307132842702347147\\
n+5=3^2\cdot 74040653\cdot 271437693517\cdot 4248023949796631302769\\
n+6=2\cdot 31^2\cdot 71\cdot 5630644789521422491419216640931778631\\
n+7=11^2\cdot 47159139991\cdot 1431258818623\cdot 94080678371991491\\
n+8=2^2\cdot 3\cdot 10159\cdot 6302859937556783443449561482879157853\\
n+9=5^2\cdot 11077607\cdot 34152250833511421371868073517\cdot 81239\\
n+10=2\cdot 7^2\cdot 11020012489\cdot 711478368155899335085950539083\\
n+11=3\cdot 13^2\cdot 12693428625414204133\cdot 119394125256257485217\\
n+12=2^3\cdot 41^2\cdot 57136306459523524392032208600151128461\\
n+13=19^2\cdot 41499463\cdot 4092839\cdot 12531282557382740386748977\\
n+14=2\cdot 3^2\cdot 5\cdot 8537433880751915066933879347275915283817\\
n+15=23^2\cdot 433842413\cdot 280186335801476584936003\cdot 11949101\\
n+16=2^2\cdot 47340127397857455478880553084361\cdot 210037\cdot 19319\\
\end{array}
\end{equation*}

The prime factorizations of 17 numbers each with 48 divisors are:
\begin{equation*}
\begin{array}{l}
n=2^2\cdot 3\cdot 5\cdot 13\cdot 8476170731892818545466235198001846355213\\
n+1=7^2\cdot 103\cdot 750030097811\cdot 78153181\cdot 22347844613545127933\\
n+2=2\cdot 11^2\cdot 127\cdot 332159\cdot 647633324071243610632168052867207\\
n+3=3\cdot 29^2\cdot 301347598303\cdot 141041383013\cdot 61654212602832919\\
n+4=2^5\cdot 5100253\cdot 361893459271\cdot 111936536869609074152759\\
n+5=5\cdot 17^2\cdot 991723\cdot 3663593992141\cdot 1259298621299483593027\\
n+6=2\cdot 3^2\cdot 2539\cdot 22374521223311\cdot 6465548022049506084248693\\
n+7=23^2\cdot 349\cdot 54051131\cdot 14548475587\cdot 45539779574304556031\\
n+8=2^2\cdot 7\cdot 179\cdot 3121\cdot 422658363974931709265507955814180549\\
n+9=3\cdot 37^2\cdot 106082489\cdot 464773609\cdot 32650093930904028778607\\
n+10=2\cdot 5^2\cdot 36779\cdot 102372709\cdot 35118841555131556240420476493\\
n+11=19^2\cdot 53\cdot 83\cdot 637640401\cdot 6529159957873341513711590609\\
n+12=2^3 \cdot 3\cdot 41^2\cdot 163875995708814159861780275987543132983\\
n+13=11\cdot 13^2\cdot 1471\cdot 475229\cdot 5087440011689810884343571563713\\
n+14=2\cdot 31^2\cdot 854593\cdot 2189577128707\cdot 1838320948158846081607\\
n+15=3^5\cdot 5\cdot 7\cdot 777356045958424275774681182180063510531\\
n+16=2^2\cdot 3371\cdot 10567\cdot 620345221\cdot 74798072495675681513256587\\
\end{array}
\end{equation*}

Another run of 17 consecutive integers each with 48 divisors starts with 
$$19,702,712,619,881,487,242,100,642,851,944,119,672,614,940.$$

To search these runs we have used the technique similar to described above:
\begin{itemize}
\item Applying the Chinese remainder theorem, an arithmetic progression was obtained, each term of which and the subsequent 16 integers are not square free.
\item The preliminary filter provided the rejection of runs for which the numbers were divided into higher powers of primes than those required.
\item Using probabilistic primality tests, we eliminated runs for which the remaining factor of at least one of the numbers $n, n+4, n+10, n+14$ was not prime.
\item Applying partial factorization to the remaining numbers of a run, we rejected those runs for which there are integers whose number of divisions cannot be equal to 24 (48).
\item Finally we produced full factorization of all 17 integers for the runs which has not been eliminated earlier.
\end{itemize}

To find the runs of equidivisible integers longer than 31, we must consider $k$ which is multiple of 120. However, with the current development of computer technology, finding such long runs seems unlikely.

\section{Open Problems}

Dickson's conjecture~\cite{Di} is a generalization of Dirichlet's theorem on arithmetic progressions.
If Dickson's conjecture holds, then following statements about $M(k)$ would also hold:
 \begin{enumerate}
 \item {\it $M(2p)=3$ for all prime $p$ greater than $3$.}
 \item {\it $M(2pq)=3$ for all primes $p, q$ such that $\gcd(p-1,q-1)>4$.}
 \item {\it $M(k)\ge 3$ for all $k$ congruent to $\pm 2$ modulo $12$ excluding $k=2$.}
 \item {\it $M(k)=7$ for all $k$ congruent to $\pm 4$ modulo $12$ excluding $k=4$.}
 \item $M(12)=M(36)=15$.
 \item {\it $M(k)=31$ for all $k$ divisible by $24$ and not divisible by $5$.}
 \item {\it $M(k)$ is unbounded}~\cite{DE}. In other words, Erd\"os conjecture~\cite[Problem B18]{GUY} is true. 
  
\vspace{2 mm}Dickson's conjecture is further extended by Schinzel's hypothesis H~\cite{SIER}. Schinzel's conjecture implies:
\item {\it $M(6p)=5$ for all odd prime $p$.}
\item {\it $M(k)\ge 5$ for all $k$ congruent to $6$ modulo $12$.}
 \end{enumerate}
 
 The answers to the following questions are unknown:
 \begin{enumerate}
 \item Are $M(k)=3$ for all $k$ congruent to $\pm 2$ modulo 12 excluding $k=2$? 
 \item Are $M(k)=5$ for all $k$ congruent to 6 modulo 12?
 \item Is $k = 2$ the only value of $k$ for which $M(k)$ is even? 
 \end{enumerate}
Finally, let $D_M$ is the set of possible values of $M(k)$. We know that $1,2,3,5,7$ belong to $D_M$. It is of interest to obtain a complete description of $D_M$.

\section*{Acknowledgements}
We thank Max Alekseyev for careful reading our original manuscript and suggesting a number of corrections.

\medskip
(V. Dziubenko) Department of Control and Applied Mathematics, Moscow Institute of Physics and Technology, Dolgoprudny, Russia

{\it E-mail address:}  \href{mailto:dzvasek257@mail.ru}{dzvasek257@mail.ru}

\smallskip
(V. Letsko) Department of Mathematics, Computer Science and Physics, Volgograd State Socio-Pedagogical University, Volgograd, Russia

{\it E-mail address:}  \href{mailto:val-etc@yandex.ru}{val-etc@yandex.ru}

\end{document}